\newcommand{\XYMATRIX}{\xymatrix@M=6pt}
\newcommand{\aremb}{\ar@{^{(}->}}
\newcommand{\arembfrom}{\ar@{<-^{)}}}
\numberwithin{equation}{section}
  \newtheorem{THM}{Theorem}[section]
  \newtheorem{LEM}[THM]{Lemma}
  \newtheorem{DEF}[THM]{Definition}
  \newtheorem{EX}{Example}[section]
\newif\ifQEDsign
\newcommand{\QED}{\global\QEDsigntrue\hfill$\square$}
\newenvironment{proof}%
    {\par\noindent\textit{Proof.}\global\QEDsignfalse}%
    {\ifQEDsign\else\QED\fi\par\bigskip\par}
\newcommand{\quotient}[2]{\genfrac{[}{]}{0pt}{}{#1}{#2}}
\newcommand{\alex}{\mathrel{<_{\mathit{alex}}}}
\newcommand{\sqsal}{\mathrel{\sqsubset_{\mathit{sal}}}}
\newcommand{\ssal}{\mathrel{\subset_{\mathit{sal}}}}
\newcommand{\lsal}{\mathrel{<_{\mathit{sal}}}}
\renewcommand{\le}{\leqslant}
\renewcommand{\ge}{\geqslant}
\newcommand{\0}{\varnothing}
\renewcommand{\phi}{\varphi}
\renewcommand{\epsilon}{\varepsilon}
\renewcommand{\rho}{\varrho}
\newcommand{\CC}{\mathbf{C}}
\newcommand{\DD}{\mathbf{D}}
\newcommand{\KK}{\mathbf{K}}
\newcommand{\union}{\cup}
\newcommand{\Boxed}[1]{\mbox{$#1$}}
\newcommand{\id}{\mathrm{id}}
\newcommand{\Ob}{\mathrm{Ob}}
\newcommand{\op}{\mathrm{op}}
\newcommand{\calA}{\mathcal{A}}
\newcommand{\calB}{\mathcal{B}}
\newcommand{\calC}{\mathcal{C}}
\newcommand{\calD}{\mathcal{D}}
\newcommand{\calP}{\mathcal{P}}
\newcommand{\calS}{\mathcal{S}}
\newcommand{\calV}{\mathcal{V}}
\newcommand{\calW}{\mathcal{W}}
\newcommand{\calX}{\mathcal{X}}
\newcommand{\CH}{\mathbf{Ch}_{\mathit{emb}}}
\newcommand{\CHrs}{\mathbf{Ch}_{\mathit{rs}}}
\newcommand{\OOGRAsrq}{\mathbf{OOgra}_{\mathit{srq}}}
\newcommand{\EDIGsrq}{\mathbf{EDig}_{\mathit{srq}}}
\title{A Dual Ramsey Theorem for Finite Ordered Oriented Graphs}
\author{%
  Dragan Ma\v sulovi\'c, Bojana Panti\'c\\
  University of Novi Sad, Faculty of Sciences\\
  Department of Mathematics and Informatics\\
  Trg Dositeja Obradovi\'ca 3, 21000 Novi Sad, Serbia\\
  e-mail: $\{$masul, bojana$\}$@dmi.uns.ac.rs}
\begin{document}
\maketitle

\begin{abstract}
  In contrast to the abundance of ``direct'' Ramsey results for classes of finite structures
  (such as finite ordered graphs, finite ordered metric spaces and finite posets
  with a linear extension), in only a handful of cases we have a meaningful
  dual Ramsey result. In this paper we prove a dual Ramsey theorem for finite ordered oriented graphs.
  Instead of embeddings, which are crucial for ``direct'' Ramsey results, we consider
  a special class of surjective homomorphisms between finite ordered oriented graphs.
  Since the setting we are interested in involves both structures and morphisms, all our results are spelled out using
  the reinterpretation of the (dual) Ramsey property in the language of category theory.

  \bigskip

  \noindent \textbf{Key Words:} dual Ramsey property, finite oriented graphs, category theory

  \noindent \textbf{AMS Subj.\ Classification (2010):} 05C55, 18A99
\end{abstract}

\section{Introduction}

Generalizing the classical results of F.~P.~Ramsey from the late 1920's, the structural Ramsey theory originated at
the beginning of 1970’s in a series of papers (see \cite{N1995} for references).
We say that a class $\KK$ of finite structures has the \emph{Ramsey property} if the following holds:
for any number $k \ge 2$ of colors and all $\calA, \calB \in \KK$ such that $\calA$ embeds into $\calB$
there is a $\calC \in \KK$
such that no matter how we color the copies of $\calA$ in $\calC$ with $k$ colors, there is a \emph{monochromatic} copy
$\calB'$ of $\calB$ in $\calC$ (that is, all the copies of $\calA$ that fall within $\calB'$ are colored by the same color).
In this parlance the Finite Ramsey Theorem takes the following form:

\begin{THM} (Finite Ramsey Theorem~\cite{Ramsey})\label{dthms.thm.frt}
  The class of all finite chains has the Ramsey property.
\end{THM}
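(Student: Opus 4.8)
The plan is to translate the statement into the standard partition-calculus form. A copy of a finite chain $\calA$ in a finite chain $\calC$ is an embedding $\calA \embedsto \calC$, and since an embedding of linear orders is uniquely determined by its image, the copies of $\calA$ in $\calC$ correspond bijectively to the $a$-element subsets of the underlying set of $\calC$, where $a = |\calA|$; likewise copies of $\calB$ correspond to $b$-element subsets ($b = |\calB|$), and the copies of $\calA$ lying within a given copy $\calB'$ of $\calB$ are exactly the $a$-element subsets of the corresponding $b$-element set. Hence the assertion is equivalent to: for all $k \ge 2$ and all $a \le b$ there is an $n$ with $n \longrightarrow (b)^a_k$, i.e.\ every $k$-colouring of the $a$-element subsets of an $n$-element set admits a homogeneous $b$-element subset.

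First I would establish the infinite version: for every $a \ge 1$ and $k \ge 2$, every $k$-colouring $\chi$ of the $a$-element subsets of $\NN$ has an infinite $\chi$-homogeneous set. This goes by induction on $a$, the case $a=1$ being the infinite pigeonhole principle. For the inductive step, construct recursively a descending chain $\NN = N_0 \supseteq N_1 \supseteq \cdots$ of infinite sets together with elements $x_i = \min N_i$, so that $\chi(\{x_i\} \cup S)$ depends only on $i$ whenever $S$ is an $(a-1)$-subset of $N_{i+1}$; here $N_{i+1}$ is obtained from the induction hypothesis applied to the colouring $S \mapsto \chi(\{x_i\} \cup S)$ of the $(a-1)$-subsets of $N_i \setminus \{x_i\}$. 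Writing $c_i$ for the resulting colour, some colour equals $c_i$ for infinitely many $i$ by pigeonhole, and the corresponding $x_i$ form an infinite $\chi$-homogeneous set, since any of its $a$-subsets $\{x_{i_0}, \dots, x_{i_{a-1}}\}$ with $i_0 < \cdots < i_{a-1}$ has colour $c_{i_0}$ because $x_{i_1}, \dots, x_{i_{a-1}} \in N_{i_0+1}$.

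Then I would derive the finite statement by compactness. Fix $a \le b$ and $k$ and suppose, for contradiction, that for every $n$ there is a \emph{bad} $k$-colouring of the $a$-subsets of $\{0, \dots, n-1\}$, meaning one with no homogeneous $b$-subset. Let $T$ be the tree whose level-$m$ vertices are the $k$-colourings of the $a$-subsets of $\{0, \dots, m-1\}$ that extend to a bad colouring on $\{0, \dots, n-1\}$ for arbitrarily large $n$, ordered by restriction. By assumption $T$ is infinite, and it is finitely branching, so by K\"onig's Lemma it has an infinite branch, whose union is a $k$-colouring $\chi$ of all $a$-subsets of $\NN$. If $\chi$ had a homogeneous $b$-subset $H$, then $H \subseteq \{0, \dots, m-1\}$ for some $m$, so the level-$m$ vertex of the branch — and hence any bad colouring extending it — would be homogeneous on $H$, a contradiction. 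Thus $\chi$ has no homogeneous $b$-subset, contradicting the infinite version; so some finite $n$ satisfies $n \longrightarrow (b)^a_k$, and the Ramsey property for chains follows. The only delicate point is this infinite-to-finite passage; alternatively one can run Ramsey's original purely finite double induction on $a$ and, within it, on $b$, avoiding any infinitary detour at the cost of heavier bookkeeping.
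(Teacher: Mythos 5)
The paper does not actually prove this statement: Theorem~\ref{dthms.thm.frt} is quoted as the classical Finite Ramsey Theorem with a citation to Ramsey's 1930 paper, so there is no in-paper proof to compare yours against. Your argument is correct and is the standard modern proof: the reduction of ``copies of a chain'' to $a$-element subsets is right (an embedding of finite linear orders is determined by its image), the infinite Ramsey theorem by induction on $a$ via the min-and-shrink construction is carried out properly (in particular the key point that $x_{i_1},\dots,x_{i_{a-1}} \in N_{i_0+1}$ forces the colour of every $a$-subset to be $c_{i_0}$), and the K\"onig's Lemma compactness argument passing from the infinite to the finite statement is sound. The only trade-off, which you already flag, is that the compactness route is non-constructive and gives no bound on $n$, whereas Ramsey's original finite double induction does; for the purposes of this paper, which only uses the qualitative statement, either is adequate.
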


In~\cite{GR} Graham and Rothschild proved their famous Graham-Roth\-schild Theorem,
a powerful combinatorial statement about words intended for dealing with the Ramsey property of certain
geometric configurations. The fact that it also implies the following dual Ramsey statement was recognized
almost a decade later.

\begin{THM} (Finite Dual Ramsey Theorem~\cite{GR,Nesetril-Rodl-DRT})\label{drpog.thm.FDRT}
  For all positive integers $k$, $a$, $m$ there is a positive integer $n$ such that
  for every $n$-element set $C$ and every $k$-coloring of the set $\quotient Ca$ of all partitions of
  $C$ with exactly $a$ blocks there is a partition $\beta$ of $C$ with exactly $m$ blocks such that
  the set of all partitions from $\quotient Ca$ which are coarser than $\beta$ is monochromatic.
\end{THM}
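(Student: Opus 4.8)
The plan is to derive this from the Graham--Rothschild theorem \cite{GR}, via the standard translation between partitions and parameter words over the empty alphabet; this is essentially the route taken by Ne\v{s}et\v{r}il and R\"odl \cite{Nesetril-Rodl-DRT}. Recall that for $1 \le a \le m$ an \emph{$a$-parameter word of length $n$} (over the empty alphabet) is a word $w = w_1 w_2 \dots w_n$ over the alphabet $\{\lambda_0, \dots, \lambda_{a-1}\}$ in which every $\lambda_i$ occurs and the first occurrence of $\lambda_i$ precedes the first occurrence of $\lambda_j$ whenever $i < j$; let $W(n,a)$ denote the set of all such words. For $w \in W(n,m)$ and $u = u_1 \dots u_m \in W(m,a)$, the composite $w \ast u \in W(n,a)$ is obtained from $w$ by replacing every occurrence of $\lambda_i$ by $u_{i+1}$, for each $i$. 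In this language the Graham--Rothschild theorem reads: for all $1 \le a \le m$ and every $r \ge 1$ there is an $n$ such that for each $r$-colouring of $W(n,a)$ there is a $w \in W(n,m)$ with the set $\{\, w \ast u : u \in W(m,a)\,\}$ monochromatic.

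First I would make the translation precise. To a partition of $[n] = \{1, \dots, n\}$ with exactly $a$ blocks, listed as $B_0, \dots, B_{a-1}$ in increasing order of their least elements, assign the word $w \in W(n,a)$ with $w_j = \lambda_i$ iff $j \in B_i$; this is a bijection $\quotient{[n]}{a} \to W(n,a)$. The crucial point is the compatibility of composition with coarsening: if $w \in W(n,m)$ corresponds to $\beta \in \quotient{[n]}{m}$, then $u \mapsto w \ast u$ is a bijection from $W(m,a)$ onto the set of partitions in $\quotient{[n]}{a}$ coarser than $\beta$. Indeed, a coarsening of $\beta$ with $a$ blocks is obtained by partitioning the $m$-element index set of $\beta$'s blocks into $a$ parts, hence is recorded by a unique $u \in W(m,a)$; and the first-occurrence conventions on the two sides are compatible because the least element of a union of blocks of $\beta$ lies in the block of smallest index, so enumerating the coarsened blocks by least element matches enumerating the parts of $u$ by least index.

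Granting the translation the theorem is immediate. Given $k$, $a$, $m$, we may assume $a \le m$, since otherwise no $m$-block partition admits an $a$-block coarsening and any $\beta \in \quotient{[n]}{m}$ with $n \ge m$ works vacuously. Apply the Graham--Rothschild theorem with $r = k$ and the pair $a \le m$ to get $n$; any $k$-colouring of $\quotient{[n]}{a}$ becomes a $k$-colouring of $W(n,a)$, so there is $w \in W(n,m)$ with $\{\, w \ast u : u \in W(m,a)\,\}$ monochromatic, and the partition $\beta \in \quotient{[n]}{m}$ matching $w$ is then as required. All the real combinatorial content lives in the Graham--Rothschild theorem itself, which is proved by induction on the number of parameters with the Hales--Jewett theorem as the base case; within the reduction above, the one step needing genuine care --- and the place where I expect the bookkeeping to be most delicate --- is the verification that parameter-word composition corresponds exactly to the coarsening relation, with the rigidity (first-occurrence) conditions on both sides lining up.
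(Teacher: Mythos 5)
The paper states this theorem only as a citation to Graham--Rothschild and Ne\v{s}et\v{r}il--R\"odl and supplies no proof of its own, and your derivation --- via the standard bijection between $a$-block partitions of an ordered $n$-set and $a$-parameter words over the empty alphabet, with parameter-word composition matching the coarsening relation --- is precisely the intended route and is correct. The one delicate point you flag, the compatibility of the first-occurrence (rigidity) conventions on both sides, you also resolve correctly: the least element of a union of blocks of $\beta$ lies in the constituent block of smallest index, so enumerating merged blocks by least element agrees with enumerating the parts of $u$ by least index.
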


One of the cornerstones of the structural Ramsey theory is the Ne\v set\v ril-R\"odl Theorem which states that the class
of all finite linearly ordered relational structures (all having the same, fixed, relational type) has the Ramsey property
\cite{AH}, \cite{Nesetril-Rodl,Nesetril-Rodl-1983}. The fact that this result has been proved independently by several research teams, and then reproved
in various ways and in various contexts \cite{AH,Nesetril-Rodl-1983,Nesetril-Rodl-1989}
clearly demonstrates the importance and justifies the distinguished status
this result has in discrete mathematics. The search for a dual version of the Ne\v set\v ril-R\"odl Theorem
was and still is an important research direction and several versions of the dual of the
Ne\v set\v ril-R\"odl Theorem have been published, most notably by Pr\"omel in~\cite{Promel-1985},
Frankl, Graham, R\"odl in~\cite{frankl-graham-rodl} and recently by Solecki in~\cite{Solecki-2010}.
In~\cite{masul-DRTRS} we prove yet another dual version of the Ne\v set\v ril-R\"odl Theorem
and, in connection to that, the dual Ramsey statements for finite oriented graphs and hypergraphs.
As a spin-off, we also proved in~\cite{masul-DRTRS} that no reasonable category of finite linearly ordered
tournaments has the dual Ramsey property. This immediately raised the question of a dual Ramsey statement for
finite ordered oriented graphs, which we solve in the present paper. It is important to note that the main result
of this paper can also be derived from the main result of~\cite{Solecki-2010}, but in this paper our goal is to
demonstrate a direct proof.

In its original form, the Ramsey theorem is a statement about coloring $k$-element subsets of $\omega = \{0, 1, 2, \ldots\}$.
A dual statement about coloring $k$-element partitions of $\omega$ was proved in~\cite{Carlson-Simpson}.
These results actually marked the beginning of a search for ``dual'' Ramsey statements,
where instead of coloring substructures we are interested in coloring ``quotients'' of structures.

Going back to the Finite Dual Ramsey Theorem,
it was observed in~\cite{promel-voigt-surj-sets} that each partition of a finite linearly ordered set can be
uniquely represented by the rigid surjection which takes each element of the underlying set to the minimum
of the block it belongs to (see Subsection~\ref{drpog.subset.lo} for the definition of a rigid surjection).
Hence, Finite Dual Ramsey Theorem is a structural Ramsey result about finite chains and special surjections between them.
This result was later generalized to trees in~\cite{solecki-dual-ramsey-trees}, and, using a different set of
techniques, to finite permutations in~\cite{masul-drp-perm}.

In contrast to the on-going Ramsey classification projects
(see for example~\cite{MANY-MANY}) where the research is focused on fine-tuning the objects, in~\cite{masul-DRTRS}
we advocate the idea that fine-tuning the morphisms is the key to proving dual Ramsey results.
Since the setting we are interested in involves
both structures and morphisms, all our results are spelled out using
the categorical reinterpretation of the Ramsey property as proposed in~\cite{masulovic-ramsey}.
Actually, it was Leeb who pointed out already in 1970 that the use of category theory can be quite helpful
both in the formulation and in the proofs of results pertaining to structural Ramsey theory~\cite{leeb-cat}.
In~\cite{masul-DRTRS}, but also in the present paper, we argue that this is even more the case when dealing with
the dual Ramsey property.

In Section~\ref{drpog.sec.prelim} we give a brief overview of certain technical notions referring to
linear orders and oriented graphs.

In Section~\ref{drpog.sec.rplct} we provide basics of category theory and give
a categorical reinterpretation of the Ramsey property
as proposed in~\cite{masulovic-ramsey}. We define the Ramsey property and the dual Ramsey property for
a category and illustrate these notions using some well-known examples.

Finally, in Section~\ref{drpog.sec.main} we prove a
dual Ramsey theorem for finite ordered oriented graphs, which is the main result of the paper.

\section{Preliminaries}
\label{drpog.sec.prelim}

In order to fix notation and terminology in this section we give a brief overview of certain notions referring to
linear orders and oriented graphs.

\subsection{Linear orders}
\label{drpog.subset.lo}

A \emph{chain} is a pair $(A, \Boxed<)$ where $<$ is a linear order on~$A$.
In case $A$ is finite we shall simply write
$\{a_1 < a_2 < \ldots < a_n\}$ instead of $(A, \Boxed<)$.

Let $(A, \Boxed<)$ and $(B, \Boxed\sqsubset)$ be chains such that $A \cap B = \0$.
Then $(A \union B, \Boxed{\Boxed< \oplus \Boxed\sqsubset})$ denotes the \emph{concatenation} of
$(A, \Boxed<)$ and $(B, \Boxed\sqsubset)$, which is a chain on $A \union B$ such that every element of $A$
is smaller then every element of $B$, the elements in $A$ are ordered linearly by~$<$, and the elements of $B$
are ordered linearly by~$\sqsubset$.

Following \cite{promel-voigt-surj-sets} we say that a surjection
$
  f : \{a_1 < a_2 < \ldots < a_n\} \to \{b_1 < b_2 < \ldots < b_k\}
$
between two finite chains is \emph{rigid} if $\min f^{-1}(x) < \min f^{-1}(y)$ whenever $x < y$.
Equivalently, a rigid surjection maps each initial segment of $\{a_1 < a_2 < \ldots < a_n\}$
onto an initial segment of $\{b_1 < b_2 < \ldots < b_k\}$;
other than that, a rigid surjection is not required to respect the linear orders in question.

Every finite chain $(A, \Boxed<)$ induces the \emph{anti-lexicographic order on $A^2$}
as follows: $(a_1, a_2) \alex (b_1, b_2)$ if and only if $a_2 < b_2$, or $a_2 = b_2$ and $a_1 < b_1$.
It also induces the \emph{anti-lexicographic order on $\calP(A)$} as follows.
For $X \in \calP(A)$ let $\vec X \in \{0, 1\}^{|A|}$ denote the characteristic vector of $X$.
(As $A$ is linearly ordered, we can assign a string of 0's and 1's to each subset of $A$.)
Then for $X, Y \in \calP(A)$ we let $X \alex Y$ if and only if $\vec X \alex \vec Y$,
where the vectors are compared with respect to the usual ordering $0 < 1$.
It is easy to see that for $X, Y \in \calP(A)$ we have that $X \alex Y$ if and only if
$X \subset Y$, or $\max(X \setminus Y) < \max(Y \setminus X)$ in case
$X$ and $Y$ are incomparable.

Finally, for a finite chain $(A, \Boxed<)$ let us define the linear order $\lsal$ on $A^2$ as follows
(``sal'' in the subscript stands for ``special anti-lexicographic''; cf. \ the definition of $\lsal$
in~\cite{masul-DRTRS}). Take any $(a_1, a_2), (b_1, b_2) \in A^2$.
\begin{itemize}
\item
  If $a_1 = a_2$ and $b_1 = b_2$ then $(a_1, a_2) \lsal (b_1, b_2)$ if and only if $a_1 < b_1$;
\item
  if $a_1 = a_2$ and $b_1 \ne b_2$ then $(a_1, a_2) \lsal (b_1, b_2)$;
\item
  if $a_1 \ne a_2$, $b_1 \ne b_2$ and $\{a_1, a_2\} = \{b_1, b_2\}$
  then $(a_1, a_2) \lsal (b_1, b_2)$ if and only if
  $(a_1, a_2) \alex (b_1, b_2)$;
\item
  finally, if $a_1 \ne a_2$, $b_1 \ne b_2$ and $\{a_1, a_2\} \ne \{b_1, b_2\}$
  then $(a_1, a_2) \lsal (b_1, b_2)$ if and only if
  $\{a_1, a_2\} \alex \{b_1, b_2\}$.
\end{itemize}

\subsection{Oriented graphs}

An \emph{oriented graph} $\calV = (V, \rho)$ is a set $V$ together with a reflexive binary relation $\rho$
on $V$ such that $(v_1, v_2) \in \rho \Rightarrow (v_2, v_1) \notin \rho$ whenever $v_1 \ne v_2$.
(Note that all the graph-like structures in this paper will be reflexive because our principal structure maps
will be special surjective homomorphisms.) Let $\Delta_V = \{(v, v) : v \in V\}$.

Let $\calV = (V, \rho)$ and $\calW = (W, \sigma)$ be oriented graphs. A mapping $f : V \to W$
is a \emph{homomorphism} from $\calV$ to $\calW$, and we write $f : \calV \to \calW$, if
$(v_1, v_2) \in \rho \Rightarrow (f(v_1), f(v_2)) \in \sigma$ for all $v_1, v_2 \in V$.
A homomorphism $f : \calV \to \calW$ is an \emph{embedding} if $f$ is injective and
$(f(v_1), f(v_2)) \in \sigma \Rightarrow (v_1, v_2) \in \rho$ for all $v_1, v_2 \in V$.
A homomorphism $f : \calV \to \calW$ is a
\emph{quotient map} if $f$ is surjective and for every $(w_1, w_2) \in \sigma$
there exists a pair $(v_1, v_2) \in \rho$ such that $f(v_1) = w_1$ and $f(v_2) = w_2$.

An \emph{ordered oriented graph} is a structure $\calV = (V, \rho, \Boxed{<})$
where $(V, \rho)$ is an oriented graph and $<$ is a linear order on~$V$.

A \emph{digraph with a linear extension} $\calV = (V, \rho, \Boxed{<})$ is a set $V$ together with a
reflexive binary relation $\rho$ and a linear order $<$ on $V$ such that
$(v_1, v_2) \in \rho \Rightarrow v_1 < v_2$ whenever $v_1 \ne v_2$.
Note that such a digraph is necessarily acyclic.

It has been amply demonstrated in~\cite{masul-DRTRS} that the key to providing a structural dual Ramsey result is
the right choice of morphisms. In case of digraphs with linear extensions the following notion has been suggested.
Let $\calV = (V, \rho, \Boxed{<})$ and $\calW = (W, \sigma, \Boxed{\sqsubset})$ be
two digraphs with linear extensions.
Then each homomorphism $f : (V, \rho) \to (W, \sigma)$ induces a mapping $\widehat f : \rho \to \sigma$ by:
$
  \widehat f(v_1, v_2) = (f(v_1), f(v_2))
$.
A homomorphism $f : (V, \rho) \to (W, \sigma)$ is a \emph{strong rigid quotient map}
from $\calV$ to $\calW$~\cite{masul-DRTRS} if
$\widehat f : (\rho, \Boxed{\lsal}) \to (\sigma, \Boxed{\sqsal})$ is a rigid surjection.
It is rather easy to see that a strong rigid quotient map is a rigid surjection and a quotient map (see~\cite[Lemma~5.2]{masul-DRTRS}).

Let us now present the corresponding notion for ordered oriented graphs.
Let $\calV = (V, \rho, \Boxed{<})$ be an ordered oriented graph. Let
\begin{align*}
  \rho_< &= \Delta_V \union \{(v_1, v_2) \in \rho : v_1 < v_2\}, \text{ and}\\
  \rho_> &= \Delta_V \union \{(v_1, v_2) \in \rho : v_1 > v_2\}.
\end{align*}
Note that both $(V, \rho_<, \Boxed{<})$ and $(V, (\rho_>)^{-1}, \Boxed{<})$ are digraphs with linear extensions.

\begin{DEF}\label{drpog.def.srqm}
  Let $\calV = (V, \rho, \Boxed<)$ and $\calW = (W, \sigma, \Boxed\sqsubset)$ be finite
  ordered oriented graphs and $f : (V, \rho) \to (W, \sigma)$ a homomorphism.
  Then $f$ is a \emph{strong rigid quotient map} between $\calV$ and $\calW$
  if $\widehat f : (\rho_<, \Boxed{\lsal}) \to (\sigma_\sqsubset, \Boxed{\sqsal})$ is a rigid surjection,
  and $\widehat f : ((\rho_>)^{-1}, \Boxed{\lsal}) \to ((\sigma_\sqsupset)^{-1}, \Boxed{\sqsal})$ is a rigid surjection.
\end{DEF}

\begin{LEM}
  A strong rigid quotient map between two finite ordered oriented graphs
  is a rigid surjection and a quotient map.
\end{LEM}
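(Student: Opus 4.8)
The plan is to leverage the analogous fact already established for digraphs with linear extensions (the remark immediately before Definition~\ref{drpog.def.srqm}, referring to \cite[Lemma~5.2]{masul-DRTRS}), and to reduce the statement about ordered oriented graphs to two applications of that fact — one for the ``forward'' part $\rho_<$ and one for the ``backward'' part $(\rho_>)^{-1}$. Concretely, let $\calV = (V, \rho, \Boxed<)$, $\calW = (W, \sigma, \Boxed\sqsubset)$ and let $f$ be a strong rigid quotient map. First I would observe that $(V, \rho_<, \Boxed<)$ and $(W, \sigma_\sqsubset, \Boxed\sqsubset)$ are digraphs with linear extensions (this is noted in the text), and that $\widehat f : (\rho_<, \Boxed{\lsal}) \to (\sigma_\sqsubset, \Boxed{\sqsal})$ being a rigid surjection is exactly the hypothesis making $f$ a strong rigid quotient map \emph{of digraphs with linear extensions} from $(V, \rho_<, \Boxed<)$ to $(W, \sigma_\sqsubset, \Boxed\sqsubset)$. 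Hence, by the cited result, $f$ is a rigid surjection $\{v_1 < \dots\} \to \{w_1 < \dots\}$ and a quotient map $(V, \rho_<) \to (W, \sigma_\sqsubset)$. The same argument applied to $(V, (\rho_>)^{-1}, \Boxed<)$ and $(W, (\sigma_\sqsupset)^{-1}, \Boxed\sqsubset)$ shows $f$ is a quotient map $(V, (\rho_>)^{-1}) \to (W, (\sigma_\sqsupset)^{-1})$, hence also $(V, \rho_>) \to (W, \sigma_\sqsupset)$.

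Next I would assemble these into the two required conclusions for $f$ itself. Since $f$ being a rigid surjection depends only on the underlying chains $(V, \Boxed<)$ and $(W, \Boxed\sqsubset)$ — which are common to all four structures — rigidity is immediate from either application. For the quotient-map property of $f : (V, \rho) \to (W, \sigma)$, I would first check $f$ is a homomorphism (given) and surjective (from rigidity), and then verify the lifting condition: given $(w_1, w_2) \in \sigma$ with $w_1 \neq w_2$, either $w_1 \sqsubset w_2$ or $w_2 \sqsubset w_1$. In the first case $(w_1, w_2) \in \sigma_\sqsubset \setminus \Delta_W$, so by the quotient-map property of $f : (V, \rho_<) \to (W, \sigma_\sqsubset)$ there is $(v_1, v_2) \in \rho_< \subseteq \rho$ with $\widehat f(v_1, v_2) = (w_1, w_2)$; note $v_1 \neq v_2$ since $w_1 \neq w_2$. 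In the second case $(w_2, w_1) \in \sigma_\sqsupset \setminus \Delta_W$ — equivalently $(w_1, w_2) \in (\sigma_\sqsupset)^{-1}$ — and the quotient-map property of $f : (V, (\rho_>)^{-1}) \to (W, (\sigma_\sqsupset)^{-1})$ produces $(v_1, v_2) \in (\rho_>)^{-1} \subseteq \rho$ with the required image. The diagonal pairs $(w, w)$ lift trivially by surjectivity of $f$ together with reflexivity of $\rho$.

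The main obstacle I anticipate is bookkeeping rather than conceptual: one must be careful that the definitions line up so that ``$f$ is a strong rigid quotient map of ordered oriented graphs'' really does specialize, under the $\rho \mapsto \rho_<$ and $\rho \mapsto (\rho_>)^{-1}$ translations, to ``$f$ is a strong rigid quotient map of digraphs with linear extensions'', so that \cite[Lemma~5.2]{masul-DRTRS} genuinely applies. In particular one should confirm that the orders $\lsal$ on $\rho_<$ and on $(\rho_>)^{-1}$ used in Definition~\ref{drpog.def.srqm} are precisely the orders $\lsal$ attached to those digraphs with linear extensions in the digraph setting, and likewise on the target side; this is a definitional matching and should be stated explicitly but carries no real difficulty. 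A secondary point worth a sentence is that $\sigma = \sigma_\sqsubset \cup (\sigma_\sqsupset)^{-1}$ as relations (up to the diagonal), which is what makes the two cases above exhaustive — this is immediate from the definitions of $\sigma_\sqsubset$ and $\sigma_\sqsupset$.
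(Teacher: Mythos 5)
Your reduction to \cite[Lemma~5.2]{masul-DRTRS} is sound and is a genuinely different route from the paper's: the paper gives a self-contained direct verification, deriving surjectivity of $f$ from the diagonal pairs and then proving rigidity by showing that $\min \widehat{f}^{-1}(u,u)$ must be a diagonal pair $(x,x)$ with $x = \min f^{-1}(u)$, whereas you outsource exactly that work to the already-cited fact for digraphs with linear extensions. Your route is shorter and makes the logical dependence on \cite{masul-DRTRS} explicit; the definitional matching you flag --- that well-definedness of $\widehat{f} : \rho_< \to \sigma_\sqsubset$ is precisely the homomorphism condition for $f : (V,\rho_<) \to (W,\sigma_\sqsubset)$, so the hypotheses of the cited lemma really are met --- is indeed the only point to check, and it holds. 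The paper's route has the virtue of being readable without consulting the earlier paper. The assembly of the quotient-map property by splitting the non-diagonal part of $\sigma$ according to the order $\sqsubset$ is essentially the same in both.

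One orientation slip in your Case 2 needs fixing. It is not true that $(\rho_>)^{-1} \subseteq \rho$: a non-diagonal pair of $(\rho_>)^{-1}$ has the form $(v_2,v_1)$ with $(v_1,v_2) \in \rho$ and $v_1 > v_2$, and then $(v_2,v_1) \notin \rho$ by the orientation axiom. (Likewise $\sigma \ne \sigma_\sqsubset \union (\sigma_\sqsupset)^{-1}$; the correct decomposition is $\sigma = \sigma_\sqsubset \union \sigma_\sqsupset$.) The fix is immediate: for $(w_1,w_2) \in \sigma$ with $w_2 \sqsubset w_1$ one has $(w_2,w_1) \in (\sigma_\sqsupset)^{-1}$, the quotient-map property of $f : (V,(\rho_>)^{-1}) \to (W,(\sigma_\sqsupset)^{-1})$ yields a pair $(u_1,u_2) \in (\rho_>)^{-1}$ with $f(u_1)=w_2$ and $f(u_2)=w_1$, and the reversed pair $(u_2,u_1)$ lies in $\rho_> \subseteq \rho$ and maps onto $(w_1,w_2)$ as required. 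With that one-line correction the argument is complete.
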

\begin{proof}
Let $\calV = (V, \rho, \Boxed<)$ and $\calW = (W, \sigma, \Boxed\sqsubset)$ be two
finite ordered oriented graphs, and let $f : \calV \to \calW$ be a strong rigid quotient map
between them. What we aim to prove is that $f : (V, \Boxed<) \to (W, \Boxed\sqsubset)$ is a rigid
surjection whereas at the same time $f : (V, \rho) \to (W, \sigma)$ is a quotient map.

We begin the proof by showing that $f$ is indeed surjective. Take any $v \in W$. Then
$(v,v) \in \sigma$ due to the fact that $\sigma$ is reflexive. Therefore, there exists an
$(u_1, u_2) \in \rho$ (regardless of whether $(u_1, u_2)$ belongs to $\rho_<$ or $\rho_>$)
such that $\widehat{f} (u_1, u_2) = (v, v)$ because $\widehat{f}$ is surjective (in both cases).
But then $f(u_1) = v$ which is exactly what we needed, as $u_1 \in V$.

Since $f$ is a homomorphism and both $\widehat{f} : \rho_< \to \sigma_\sqsubset$
and $\widehat{f} : (\rho_>)^{-1} \to (\sigma_\sqsupset)^{-1}$ are surjective it follows immediately
that $f$ must be a quotient map.

Finally, let us prove that $f$ is also a rigid surjection. In other words, taking any
$u, v \in W$ such that $u \sqsubset v$ let us show that $\min f^{-1}(u) < \min f^{-1} (v)$. From
$u \sqsubset v$ it is clear that $(u, u) \sqsal (v, v)$ whence
$\min \widehat{f}^{-1}(u, u) \lsal \min \widehat{f}^{-1} (v, v)$ owing to
$\widehat{f} : \rho_< \to \sigma_\sqsubset$ being
a rigid surjection. Now, let $\min \widehat{f}^{-1}(u, u) = (x_1, x_2)$, where $ x_1 \not= x_2$.
Then $\widehat{f}(x_1, x_2) = (u, u)$ which implies $f(x_1) = u = f(x_2)$. Hence, $\widehat{f}(x_1, x_1) = (u, u)$.
However, it would then appear that $(x_1, x_1) \sqsal (x_1,x_2) = \min \widehat{f}^{-1}(u, u)$ which is
a clear contradiction. So, $\min \widehat{f}^{-1}(u, u)$ must be of the form $(x, x)$ for some (adequate) $x \in V$. We shall
show that $ x = \min f^{-1}(u)$. Assuming the opposite that there exists a $t \in V$, $t \not= x$ such that
$ t = \min f^{-1}(u)$, but $f(x) = u$, we would once again encounter a problem as it would mean that
$t < x \Rightarrow (t, t) \lsal (x, x) = \min \widehat{f}^{-1}(u, u) $, even though $\widehat{f}(t, t) = (f(t), f(t)) = (u, u)$,
which is an obvious contradiction. Analogously, we have every right to denote $\min \widehat{f}^{-1}(v, v)$ with $(y, y)$,
where $y = \min f^{-1}(v)$. At last we see that from $ (x, x) = \min \widehat{f}^{-1}(u, u) \lsal \min \widehat{f}^{-1}(v, v) = (y, y)$
our conclusion $\min f^{-1}(u) = x < y = \min f^{-1}(v)$ follows.
\end{proof}

Because the definition of strong rigid quotient maps for ordered oriented graphs is far from intuitive let us give a
simple example.

\begin{EX}
  Let $\calA = (\{1, 2, 3\}, \alpha, \Boxed<)$ and $\calB = (\{1, 2, 3, 4, 5, 6\}, \beta, \Boxed<)$
  be ordered oriented graphs where the non-loops in $\calA$ are $12$, $23$ and $31$, the non-loops in $\calB$ are $12$, $23$,
  $34$, $45$, $56$ and $61$, and $<$ is the usual ordering of the integers. (In this example only
  we shall write $ij$ instead of $(i, j)$.) Consider the following surjective homomorphisms $\calB \to \calA$:
  $$
    f = \begin{pmatrix} 1 & 2 & 3 & 4 & 5 & 6 \\ 1 & 2 & 3 & 1 & 2 & 3 \end{pmatrix},
    \qquad
    g = \begin{pmatrix} 1 & 2 & 3 & 4 & 5 & 6 \\ 1 & 2 & 2 & 3 & 3 & 3 \end{pmatrix}.
  $$
  Then $f$ is \emph{not} a strong rigid quotient map $\calB \to \calA$ because $\widehat f : \beta_< \to \alpha_<$ is not
  well defined. Namely, $34 \in \beta_<$ but $\widehat f(34) = 31 \notin \alpha_<$. On the other hand, $g$ is a strong
  rigid quotient map $\calB \to \calA$ as both
  $$
    \widehat g : \beta_< \to \alpha_< : \left(\begin{array}{@{}ccccccccccc@{}}
      11 & 22 & 33 & 44 & 55 & 66 & 12 & 23 & 34 & 45 & 56 \\
      11 & 22 & 22 & 33 & 33 & 33 & 12 & 22 & 23 & 33 & 33
    \end{array}\right)
  $$
  and
  $$
    \widehat g : (\beta_>)^{-1} \to (\alpha_>)^{-1} : \left(\begin{array}{@{}ccccccc@{}}
      11 & 22 & 33 & 44 & 55 & 66 & 16 \\
      11 & 22 & 22 & 33 & 33 & 33 & 13
    \end{array}\right)
  $$
  are well-defined rigid surjections.
\end{EX}

\section{Category theory and the Ramsey property}
\label{drpog.sec.rplct}

In order to specify a \emph{category} $\CC$ one has to specify
a class of objects $\Ob(\CC)$, a set of morphisms $\hom_\CC(A, B)$ for all $A, B \in \Ob(\CC)$,
the identity morphism $\id_A$ for all $A \in \Ob(\CC)$, and
the composition of mor\-phi\-sms~$\cdot$~so that
$\id_B \cdot f = f \cdot \id_A = f$ for all $f \in \hom_\CC(A, B)$, and
$(f \cdot g) \cdot h = f \cdot (g \cdot h)$ whenever the compositions are defined.
A morphism $f \in \hom_\CC(B, C)$ is \emph{monic} or \emph{left cancellable} if
$f \cdot g = f \cdot h$ implies $g = h$ for all $g, h \in \hom_\CC(A, B)$ where $A \in \Ob(\CC)$ is arbitrary.
A morphism $f \in \hom_\CC(B, C)$ is \emph{epi} or \emph{right cancellable} if
$g \cdot f = h \cdot f$ implies $g = h$ for all $g, h \in \hom_\CC(C, D)$ where $D \in \Ob(\CC)$ is arbitrary.

\begin{EX}\label{drpog.ex.CH-def}
  Finite chains and embeddings constitute a category that we denote by~$\CH$.
\end{EX}

\begin{EX}\label{drpog.ex.CHrs-def}
  The composition of two rigid surjections is again a rigid surjection, so finite chains and rigid surjections constitute a category
  which we denote by~$\CHrs$.
\end{EX}

\begin{EX}\label{drpog.ex.EDIGsrq}
  Finite digraphs with linear extensions together with strong rigid quotient maps constitute a category
  which we denote by~$\EDIGsrq$.
\end{EX}

\begin{LEM}\label{drpog.lem.EDIGsrq}
  Finite ordered oriented graphs together with strong rigid quotient maps as introduced in Definition~\ref{drpog.def.srqm}
  constitute a category which we denote by~$\OOGRAsrq$.
\end{LEM}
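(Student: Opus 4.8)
The plan is to verify the category axioms for $\OOGRAsrq$: that the identity on each finite ordered oriented graph is a strong rigid quotient map, that the composition of two strong rigid quotient maps is again a strong rigid quotient map, and that composition is associative with the identities acting as units. Associativity and the unit laws are inherited from the category of sets and functions (a strong rigid quotient map is, in particular, a function on underlying sets), so the real content is closure of the class of morphisms under composition and the fact that identities belong to the class.

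For the identity $\id_\calV$ on $\calV = (V, \rho, \Boxed<)$, one checks that $\widehat{\id_\calV}$ is the identity on $\rho$, and that the induced maps $\id : (\rho_<, \lsal) \to (\rho_<, \lsal)$ and $\id : ((\rho_>)^{-1}, \lsal) \to ((\rho_>)^{-1}, \lsal)$ are trivially rigid surjections. This is immediate from the definitions. The bulk of the proof is the composition step, which I would handle by reducing it to the analogous fact already available for digraphs with linear extensions. Specifically, given strong rigid quotient maps $f : \calU \to \calV$ and $g : \calV \to \calW$ between finite ordered oriented graphs, I would first observe that $\widehat{g \cdot f} = \widehat g \cdot \widehat f$ as maps on the relations, and that $(g \cdot f)$ is a homomorphism $(U, \tau) \to (W, \sigma)$. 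Then I would argue that $f$ restricts to a strong rigid quotient map $(U, \tau_<, \Boxed<) \to (V, \rho_<, \Boxed<)$ of digraphs with linear extensions, and likewise $(U, (\tau_>)^{-1}, \Boxed<) \to (V, (\rho_>)^{-1}, \Boxed<)$, and similarly for $g$; since $\EDIGsrq$ is already known to be a category (Example~\ref{drpog.ex.EDIGsrq}), the composites $\widehat{g} \cdot \widehat{f}$ are rigid surjections on the appropriate relations, which is exactly the condition in Definition~\ref{drpog.def.srqm} for $g \cdot f$.

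The one subtlety — and I expect this to be the main obstacle — is checking that these restrictions of $f$ and $g$ to the ``$<$ part'' and the ``$>$ part'' really are morphisms in $\EDIGsrq$, and in particular that they are \emph{well defined}, i.e.\ that $f$ maps $\tau_<$ into $\rho_<$ and $(\tau_>)^{-1}$ into $(\rho_>)^{-1}$. This is not automatic from $f$ being a homomorphism of the unordered oriented graphs: a priori an edge $v_1 < v_2$ with $(v_1,v_2) \in \tau$ could be sent to an edge oriented the ``wrong way'' relative to $\sqsubset$ in $\calV$, as the Example with the map $f$ shows. But precisely this is ruled out by the hypothesis that $\widehat f : (\tau_<, \lsal) \to (\rho_<, \sqsal)$ is a rigid surjection — for the statement ``$\widehat f$ is a rigid surjection onto $(\rho_<, \sqsal)$'' to even make sense, $\widehat f$ must land in $\rho_<$. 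So I would make this explicit: being a strong rigid quotient map of ordered oriented graphs is, by unwinding Definition~\ref{drpog.def.srqm}, exactly the same as the pair of conditions ``$f : (U, \tau_<, \Boxed<) \to (V, \rho_<, \Boxed<)$ is a strong rigid quotient map of digraphs with linear extensions'' and ``$f : (U, (\tau_>)^{-1}, \Boxed<) \to (V, (\rho_>)^{-1}, \Boxed<)$ is a strong rigid quotient map of digraphs with linear extensions,'' together with the (automatically preserved) fact that composing homomorphisms of the underlying oriented graphs yields a homomorphism. Once this reformulation is in place, closure under composition follows directly from the fact that $\EDIGsrq$ is a category, and the lemma is proved. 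The only thing to double-check is the bookkeeping relating $(\tau_<)$ for the composite to $(\tau_<)$ for $f$ — i.e.\ that $(g \cdot f)_<$ and $(g \cdot f)_>$ are controlled by $f_<, g_<$ and $f_>, g_>$ respectively — which again follows because a homomorphism of ordered oriented graphs cannot flip the order of an edge endpoint relative to the linear extension without violating rigidity of the corresponding $\widehat{}$ map.
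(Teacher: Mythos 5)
Your proposal is correct, and it reaches the conclusion by a somewhat different route than the paper. The paper's proof is a self-contained computation: given strong rigid quotient maps $f$ and $g$, it notes that $\widehat{g \cdot f} = \widehat{g}\cdot\widehat{f}$ and then verifies directly that the composite of the two rigid surjections is again a rigid surjection, the key step being the observation that $\min \widehat{f}^{-1}(\min S) = \min \widehat{f}^{-1}(S)$ for $S$ a subset of the codomain relation. You instead reformulate Definition~\ref{drpog.def.srqm} as saying that a strong rigid quotient map of ordered oriented graphs is exactly a common underlying function carrying two strong rigid quotient maps of digraphs with linear extensions, namely on $(\rho_<,\Boxed<)$ and on $((\rho_>)^{-1},\Boxed<)$, and then delegate closure under composition to the fact that $\EDIGsrq$ is a category (Example~\ref{drpog.ex.EDIGsrq}). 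The underlying computation is the same --- the paper's inline argument is precisely the proof that morphisms of $\EDIGsrq$ compose --- so your version is only as self-contained as the citation behind Example~\ref{drpog.ex.EDIGsrq}, which the paper states without proof; but the modular packaging buys something real: your reformulation of an $\OOGRAsrq$-morphism as a diagonal pair of $\EDIGsrq$-morphisms is exactly the device used later in the proof of the main theorem, where the subcategory $\DD$ of $\EDIGsrq^{\op}\times\EDIGsrq^{\op}$ consists of the objects $\big((V,\rho_<,\Boxed<),(V,(\rho_>)^{-1},\Boxed<)\big)$ and the morphisms $(f,f)$. Your point about well-definedness --- that $\widehat{f}$ mapping $\rho_<$ into $\sigma_\sqsubset$ is part of the hypothesis and not a consequence of $f$ being a homomorphism of the unordered oriented graphs --- is correctly identified and is indeed the only place where the reduction could silently fail; it does not. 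The remarks on identities and associativity are trivial and handled implicitly by the paper as well.
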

\begin{proof}
  We only have to show that the composition of strong rigid quotient maps of ordered oriented graphs
  (Definition~\ref{drpog.def.srqm}) is again a strong rigid quotient map of ordered oriented graphs.
  
  So, let us consider the composition $g \cdot f$ of two strong rigid quotient maps $f : (V, \rho, \Boxed<) \to (W, \sigma, \Boxed\sqsubset)$
  and $g : (W, \sigma, \Boxed\sqsubset) \to (T, \psi, \Boxed\subset)$. Knowing that both $f$ and $g$ are homomorphisms it follows easily
  that their composition $g \cdot f : (V, \rho) \to (T, \psi)$ is a homomorphism, too. What remains to be shown is that
  $\widehat{g \cdot f} : (\rho_<, \lsal) \to (\psi_\subset, \ssal)$ and
  $\widehat{g \cdot f} : ((\rho_>)^{-1}, \lsal) \to ((\psi_\supset)^{-1}, \ssal)$ are rigid surjections.

  Firstly, notice that $\widehat{g \cdot f} = \widehat{g} \cdot \widehat{f}$ and that
  $\widehat{g \cdot f}^{-1} = \widehat{f}^{-1} \cdot \widehat{g}^{-1}$. Since both $\widehat{f}$ and $\widehat{g}$ are
  surjections it follows that their composition $\widehat{g} \cdot \widehat{f} = \widehat{g \cdot f}$ is surjective.
  Now, take any $(x, y), (z, t) \in \psi_\subset$ such that $(x, y) \ssal (z, t)$.
  Since $\widehat{g} : (\sigma_\sqsubset, \Boxed{\sqsal}) \to (\psi_\subset, \Boxed{\ssal})$ is a rigid surjection,
  we have $ \min \widehat{g}^{-1}(x, y) \sqsal \min \widehat{g}^{-1}(z, t)$.
  Similarly, due to the fact that $\widehat{f} : (\rho_<, \Boxed{\lsal}) \to (\sigma_\sqsubset, \Boxed{\sqsal})$
  is a rigid surjection it follows that
  $$
    \min \widehat{f}^{-1} (\min \widehat{g}^{-1}(x, y)) \lsal \min \widehat{f}^{-1}(\min \widehat{g}^{-1}(z, t)).
  $$
  It is easy to show that for any $S \subseteq \rho_\sqsubset$ we have $\min \widehat{f}^{-1}(\min S) =
  \min \widehat{f}^{-1}(S)$. Therefore,
  \begin{align*}
    \min \widehat{g \cdot f}^{-1} (x, y)
    & = \min \widehat{f}^{-1} ( \widehat{g}^{-1}(x, y) ) = \min \widehat{f}^{-1}(\min \widehat{g}^{-1}(x, y))\\
    &\lsal \min \widehat{f}^{-1}(\min \widehat{g}^{-1}(z, t))\\
    &= \min \widehat{f}^{-1} ( \widehat{g}^{-1}(z, t) ) = \min \widehat{g \cdot f}^{-1} (z, t),
  \end{align*}
  which confirms the claim that
  $\widehat{g \cdot f} : (\rho_<, \lsal) \to (\psi_\subset, \ssal)$ is a rigid surjection.
  By the same argument,
  $\widehat{g \cdot f} : ((\rho_>)^{-1}, \lsal) \to ((\psi_\supset)^{-1}, \ssal)$ is a rigid surjection.
\end{proof}

For a category $\CC$, the \emph{opposite category}, denoted by $\CC^\op$, is the category whose objects
are the objects of $\CC$, morphisms are formally reversed so that
$
  \hom_{\CC^\op}(A, B) = \hom_\CC(B, A)
$,
and so is the composition:
$
  f \cdot_{\CC^\op} g = g \cdot_\CC f
$.

A category $\DD$ is a \emph{subcategory} of a category $\CC$ if $\Ob(\DD) \subseteq \Ob(\CC)$ and
$\hom_\DD(A, B) \subseteq \hom_\CC(A, B)$ for all $A, B \in \Ob(\DD)$.
A category $\DD$ is a \emph{full subcategory} of a category $\CC$ if $\Ob(\DD) \subseteq \Ob(\CC)$ and
$\hom_\DD(A, B) = \hom_\CC(A, B)$ for all $A, B \in \Ob(\DD)$.

A \emph{functor} $F : \CC \to \DD$ from a category $\CC$ to a category $\DD$ maps $\Ob(\CC)$ to
$\Ob(\DD)$ and maps morphisms of $\CC$ to morphisms of $\DD$ so that
$F(f) \in \hom_\DD(F(A), F(B))$ whenever $f \in \hom_\CC(A, B)$, $F(f \cdot g) = F(f) \cdot F(g)$ whenever
$f \cdot g$ is defined, and $F(\id_A) = \id_{F(A)}$.

Categories $\CC$ and $\DD$ are \emph{isomorphic} if there exist functors $F : \CC \to \DD$ and $G : \DD \to \CC$ which are
inverses of one another both on objects and on morphisms.

The \emph{product} of categories $\CC_1$ and $\CC_2$ is the category $\CC_1 \times \CC_2$ whose objects are pairs $(A_1, A_2)$
where $A_1 \in \Ob(\CC_1)$ and $A_2 \in \Ob(\CC_2)$, morphisms are pairs $(f_1, f_2) : (A_1, A_2) \to (B_1, B_2)$ where
$f_1 : A_1 \to B_1$ is a morphism in $\CC_1$ and $f_2 : A_2 \to B_2$ is a morphism in $\CC_2$.
The composition of morphisms is carried out componentwise: $(f_1, f_2) \cdot (g_1, g_2) = (f_1 \cdot g_1, f_2 \cdot g_2)$.

Let $\CC$ be a category and $\calS$ a set. We say that
$
  \calS = \calX_1 \union \ldots \union \calX_k
$
is a \emph{$k$-coloring} of $\calS$ if $\calX_i \cap \calX_j = \0$ whenever $i \ne j$.
Equivalently, a $k$-coloring of $\calS$ is any map $\chi : \calS \to \{1, 2, \ldots, k\}$.
For an integer $k \ge 2$ and $A, B, C \in \Ob(\CC)$ we write
$
  C \longrightarrow (B)^{A}_k
$
to denote that for every $k$-coloring
$
  \hom_\CC(A, C) = \calX_1 \union \ldots \union \calX_k
$
there is an $i \in \{1, \ldots, k\}$ and a morphism $w \in \hom_\CC(B, C)$ such that
$w \cdot \hom_\CC(A, B) \subseteq \calX_i$.

\begin{DEF}
  A category $\CC$ has the \emph{Ramsey property} if
  for every integer $k \ge 2$ and all $A, B \in \Ob(\CC)$
  such that $\hom_\CC(A, B) \ne \0$ there is a
  $C \in \Ob(\CC)$ such that $C \longrightarrow (B)^{A}_k$.

  A category $\CC$ has the \emph{dual Ramsey property} if $\CC^\op$ has the Ramsey property.
\end{DEF}

Clearly, if $\CC$ and $\DD$ are isomorphic categories and one of them has the (dual) Ramsey property,
then so does the other. Actually, even more is true: if $\CC$ and $\DD$ are equivalent categories
and one of them has the (dual) Ramsey property, then so does the other. We refrain from providing the
definition of (the fairly standard notion of) categorical equivalence as we shall have no use for it
in this paper, and for the proof we refer the reader to~\cite{masulovic-ramsey}.

\begin{EX}\label{cerp.ex.FRT-ch}
  The category $\CH$ (see Example~\ref{drpog.ex.CH-def})
  has the Ramsey property. This is just a reformulation of the Finite Ramsey Theorem (Theorem~\ref{dthms.thm.frt}).
\end{EX}

\begin{EX}\label{cerp.ex.FDRT-ch}
  The category $\CHrs$ (see Example~\ref{drpog.ex.CHrs-def})
  has the dual Ramsey property. This is just a reformulation of the Finite Dual Ramsey Theorem (Theorem~\ref{drpog.thm.FDRT};
  see also the discussion in the Introduction.)
\end{EX}

\begin{EX}\label{drpog.ex.EDIGsrq-RP}
  The category $\EDIGsrq$ (see Example~\ref{drpog.ex.EDIGsrq})
  has the dual Ramsey property~\cite{masul-DRTRS}.
\end{EX}

\section{The main result}
\label{drpog.sec.main}

Our goal in this paper is to prove that the category $\OOGRAsrq$ has the dual Ramsey property.
In order to do so, we shall employ a strategy devised in~\cite{masul-drp-perm}. Let us recall two technical statements
from~\cite{masul-drp-perm}.

A \emph{diagram} in a category $\CC$ is a functor $F : \Delta \to \CC$ where the category $\Delta$ is referred to as the
\emph{shape of the diagram}. Given a diagram $F : \Delta \to \CC$,
an object $C \in \Ob(\CC)$ and a family of morphisms $(h_\delta : F(\delta) \to C)_{\delta \in \Ob(\Delta)}$
form a \emph{commuting cocone in $\CC$ over~$F$}
if $h_\gamma \cdot F(g) = h_\delta$ for every morphism $g : \delta \to \gamma$ in $\Delta$:
$$
  \xymatrix{
     & C & \\
    F(\delta) \ar[ur]^{h_\delta} \ar[rr]_{F(g)} & & F(\gamma) \ar[ul]_{h_\gamma}
  }
$$
We then say that the diagram $F$ \emph{has a commuting cocone in $\CC$}.

A \emph{binary category} is a finite, acyclic, bipartite digraph with loops
where all the arrows go from one class of vertices into the other
and the out-degree of all the vertices in the first class is~2 (modulo loops):
$$
\xymatrix{
  & \\
  \bullet \ar@(ur,ul) & \bullet \ar@(ur,ul) & \bullet \ar@(ur,ul) & \ldots & \bullet \ar@(ur,ul) \\
  \bullet \ar@(dr,dl) \ar[u] \ar[ur] & \bullet \ar@(dr,dl) \ar[ur] \ar[ul] & \bullet \ar@(dr,dl) \ar[u] \ar[ur] & \ldots & \bullet \ar@(dr,dl) \ar[u] \ar[ull] \\
  &
}
$$
A \emph{binary diagram} in a category $\CC$ is a functor $F : \Delta \to \CC$ where $\Delta$ is a binary category,
$F$ takes the bottom row of $\Delta$ onto the same object, and takes the top row of $\Delta$ onto
the same object, Fig.~\ref{nrt.fig.2}.
A subcategory $\DD$ of a category $\CC$ is \emph{closed for binary diagrams} if every binary diagram
$F : \Delta \to \DD$ which has a commuting cocone in $\CC$ has a commuting cocone in~$\DD$.

\begin{figure}
  $$
  \xymatrix{
    \bullet \ar@(ur,ul) & \bullet \ar@(ur,ul) & \bullet \ar@(ur,ul)
    & & B & B & B
  \\
    \bullet \ar@(dr,dl) \ar[u] \ar[ur] & \bullet \ar@(dr,dl) \ar[ur] \ar[ul] & \bullet \ar@(dr,dl) \ar[ul] \ar[u]
    & & A \ar[u]^{f_1} \ar[ur]_(0.3){f_2} & A \ar[ur]^(0.3){f_4} \ar[ul]_(0.3){f_3} & A \ar[ul]^(0.3){f_5} \ar[u]_{f_6}
  \\
    & \Delta \ar[rrrr]^F  & & & & \CC
  }
  $$
  \caption{A binary diagram in $\CC$ (of shape $\Delta$)}
  \label{nrt.fig.2}
\end{figure}
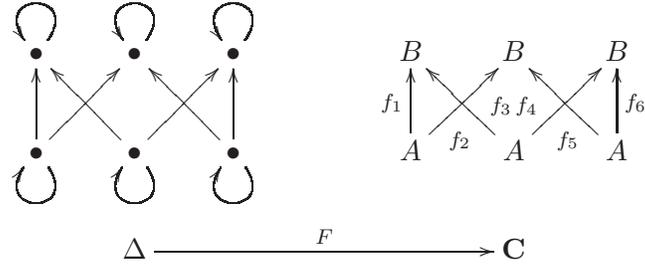

\begin{THM}\label{nrt.thm.1} \cite{masul-drp-perm}
  Let $\CC$ be a category such that every morphism in $\CC$ is monic and
  such that $\hom_\CC(A, B)$ is finite for all $A, B \in \Ob(\CC)$, and let $\DD$ be a
  (not necessarily full) subcategory of~$\CC$.
  If $\CC$ has the Ramsey property and $\DD$ is closed for binary diagrams, then $\DD$ has the Ramsey property.
\end{THM}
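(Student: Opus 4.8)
The plan is to transfer the Ramsey property from $\CC$ to $\DD$ by reading off a finite binary diagram from a Ramsey witness inside $\CC$ and then pulling that diagram back into $\DD$ via the closure hypothesis. Throughout, fix $k \ge 2$ and $A, B \in \Ob(\DD)$ with $\hom_\DD(A, B) \ne \0$; the case $|\hom_\DD(A, B)| \le 1$ is immediate (take $C = B$, $w = \id_B$), so assume $|\hom_\DD(A, B)| \ge 2$.

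First I would apply the Ramsey property of $\CC$ to obtain $C \in \Ob(\CC)$ with $C \longrightarrow (B)^A_k$ in $\CC$, and list the finite hom-set $\hom_\CC(B, C) = \{u_1, \dots, u_m\}$ (which is nonempty, since otherwise $C \longrightarrow (B)^A_k$ would fail). Then I would build a binary diagram $F : \Delta \to \DD$ whose top row consists of $m$ copies $t_1, \dots, t_m$ of $B$, one for each $u_i$, and whose bottom row contains, for every ``coincidence'' $u_i \cdot f = u_j \cdot g$ with $f, g \in \hom_\DD(A, B)$ and $(i, f) \ne (j, g)$, a single copy $b$ of $A$ with its two outgoing arrows carrying $f$ to $t_i$ and $g$ to $t_j$ (the loops required of a binary category being sent to identities). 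This $F$ is a genuine binary diagram in $\DD$, since all its arrows are labeled by $\DD$-morphisms $A \to B$; and $C$, equipped with $h_{t_i} = u_i$ and with $h_b = u_i \cdot f = u_j \cdot g$ on the bottom vertices, is by construction a commuting cocone over $F$ in $\CC$. Hence, since $\DD$ is closed for binary diagrams, $F$ has a commuting cocone $(C', (v_i : B \to C')_i)$ in $\DD$ with $C' \in \Ob(\DD)$; unraveling the cocone equations, this says precisely that $u_i \cdot f = u_j \cdot g$ forces $v_i \cdot f = v_j \cdot g$ for all $i, j$ and all $f, g \in \hom_\DD(A, B)$. I claim this $C'$ works.

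To see $C' \longrightarrow (B)^A_k$ in $\DD$, take any $k$-coloring $\chi$ of $\hom_\DD(A, C')$ and use it to color $\hom_\CC(A, C)$ by setting $\chi'(\psi) = \chi(v_i \cdot f)$ whenever $\psi = u_i \cdot f$ for some $i$ and some $f \in \hom_\DD(A, B)$, and $\chi'(\psi) = 1$ otherwise; the implication recorded above is exactly what makes $\chi'$ well defined. Applying $C \longrightarrow (B)^A_k$ to $\chi'$ yields a color $c$ and a morphism $w \in \hom_\CC(B, C)$, so $w = u_{i_0}$ for some $i_0$, with $\chi'(u_{i_0} \cdot f) = c$ for every $f \in \hom_\CC(A, B)$. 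Restricting to $f \in \hom_\DD(A, B)$ and unwinding the definition of $\chi'$ gives $\chi(v_{i_0} \cdot f) = c$ for all such $f$, so $v_{i_0} \in \hom_\DD(B, C')$ is a $\chi$-monochromatic copy of $B$ in $C'$, as required.

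The bookkeeping that $\Delta$ is a legitimate binary category (finiteness, acyclicity, and out-degree $2$ from the bottom are all immediate from the construction) and the treatment of degenerate hom-sets are routine. The crux — and the place where the hypotheses earn their keep — is the color-transfer step: one must recolor the $\CC$-arrows $A \to C$ so that the recoloring is simultaneously well defined (which forces us to record exactly the coincidences $u_i \cdot f = u_j \cdot g$ among $\DD$-morphisms, i.e.\ to invoke closure for binary diagrams) and able to carry monochromaticity back along the correspondence $u_i \leftrightarrow v_i$ (which is why $\hom_\CC(B, C)$ must be finite: the monochromatic copy of $B$ found inside $C$ has to be one of the finitely many $u_i$ that were built into $\Delta$). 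Monicity of the morphisms of $\CC$ enters only to keep the degenerate coincidences — those with $i = j$, where $u_i \cdot f = u_i \cdot g$ already gives $f = g$ — trivial, so that no spurious identifications are forced on $\DD$.
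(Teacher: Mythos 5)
Your argument is correct, and it is essentially the proof of this theorem given in the cited source \cite{masul-drp-perm} (the present paper only quotes the statement): one builds the binary diagram whose top row is indexed by the finite set $\hom_\CC(B,C)$ for a Ramsey witness $C$ in $\CC$ and whose bottom row records the coincidences $u_i\cdot f=u_j\cdot g$ with $f,g\in\hom_\DD(A,B)$, pulls the cocone back into $\DD$ by closure, and transfers the coloring along $u_i\leftrightarrow v_i$. Your accounting of where each hypothesis is used (finiteness of hom-sets for the diagram and for locating the monochromatic $w$ among the $u_i$, monicity to rule out parallel arrows from a bottom vertex to a single top vertex, closure for the well-definedness of the transferred coloring) is also accurate.
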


We shall also need a categorical version of the Product Ramsey Theorem for Finite Structures of M.~Soki\'c~\cite{sokic2}.
We proved this statement in the categorical context in~\cite{masul-drp-perm} where we used this abstract version
to prove that the class of finite permutations has the dual Ramsey property.

\begin{THM}\label{sokic-prod} \cite{masul-drp-perm}
  Let $\CC_1$ and $\CC_2$ be categories such that $\hom_{\CC_i}(A, B)$ is finite
  for all $A, B \in \Ob(\CC_i)$, $i \in \{1, 2\}$.
  If $\CC_1$ and $\CC_2$ both have the Ramsey property then
  $\CC_1 \times \CC_2$ has the Ramsey property.
\end{THM}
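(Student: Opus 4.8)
The plan is to carry over the classical two-step proof of the Product Ramsey Theorem into the categorical language, the one novelty being that ``copies'' are replaced by morphisms and that the colour palette has to be inflated in the first step.

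First I would fix $k \ge 2$ and objects $(A_1,A_2),(B_1,B_2)$ of $\CC_1 \times \CC_2$ with $\hom_{\CC_1\times\CC_2}((A_1,A_2),(B_1,B_2)) \ne \0$. Since this hom-set equals $\hom_{\CC_1}(A_1,B_1) \times \hom_{\CC_2}(A_2,B_2)$, both factors are nonempty, which is precisely what licenses the two invocations of the Ramsey property below. Then I would use the Ramsey property of $\CC_2$ to choose $C_2$ with $C_2 \longrightarrow (B_2)^{A_2}_k$; note this forces $\hom_{\CC_2}(A_2,C_2) \ne \0$, so $M := |\hom_{\CC_2}(A_2,C_2)|$ is a positive integer, finite by hypothesis. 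Next I would apply the Ramsey property of $\CC_1$ with $k^M \ge 2$ colours to choose $C_1$ with $C_1 \longrightarrow (B_1)^{A_1}_{k^M}$, and claim that $(C_1,C_2) \longrightarrow ((B_1,B_2))^{(A_1,A_2)}_k$.

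To verify the claim, given a $k$-colouring $\chi$ of $\hom_{\CC_1}(A_1,C_1) \times \hom_{\CC_2}(A_2,C_2)$, I would recode it as the $k^M$-colouring $\chi'(f_1) := \bigl(f_2 \mapsto \chi(f_1,f_2)\bigr)$ of $\hom_{\CC_1}(A_1,C_1)$, valued in $\{1,\dots,k\}^{\hom_{\CC_2}(A_2,C_2)}$. The choice of $C_1$ then yields $w_1 \in \hom_{\CC_1}(B_1,C_1)$ with $w_1 \cdot \hom_{\CC_1}(A_1,B_1)$ being $\chi'$-monochromatic, i.e.\ a single map $\phi : \hom_{\CC_2}(A_2,C_2) \to \{1,\dots,k\}$ with $\chi(w_1 \cdot g_1, f_2) = \phi(f_2)$ for all $g_1 \in \hom_{\CC_1}(A_1,B_1)$ and all $f_2 \in \hom_{\CC_2}(A_2,C_2)$. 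Feeding $\phi$, now viewed as a $k$-colouring of $\hom_{\CC_2}(A_2,C_2)$, into the choice of $C_2$ produces $w_2 \in \hom_{\CC_2}(B_2,C_2)$ and a colour $i$ with $\phi(w_2 \cdot g_2) = i$ for all $g_2 \in \hom_{\CC_2}(A_2,B_2)$. Then $(w_1,w_2) \in \hom_{\CC_1\times\CC_2}((B_1,B_2),(C_1,C_2))$, and for every $(g_1,g_2) \in \hom_{\CC_1}(A_1,B_1)\times\hom_{\CC_2}(A_2,B_2)$ componentwise composition in the product category gives $\chi\bigl((w_1,w_2)\cdot(g_1,g_2)\bigr) = \chi(w_1\cdot g_1,\, w_2\cdot g_2) = \phi(w_2\cdot g_2) = i$; hence $(w_1,w_2)\cdot\hom_{\CC_1\times\CC_2}((A_1,A_2),(B_1,B_2)) \subseteq \chi^{-1}(i)$, as required.

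I do not expect a genuine obstacle: the argument is a direct transcription of the finite Product Ramsey Theorem. The two points that need care are the order of the steps — $C_2$ must be fixed before $C_1$ so that the auxiliary palette $k^M$ used on the $\CC_1$-side is already pinned down and finite, which is exactly where the hypothesis that all hom-sets are finite is used — and the uniform-in-$g_1$ pattern equality $\chi(w_1\cdot g_1,f_2)=\phi(f_2)$, which is just the statement that $w_1\cdot\hom_{\CC_1}(A_1,B_1)$ lies inside one $\chi'$-class.
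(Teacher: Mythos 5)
Your argument is correct and is exactly the standard proof of the product Ramsey theorem in categorical form (first fix $C_2$ for $k$ colours, then $C_1$ for $k^{M}$ colours with $M=|\hom_{\CC_2}(A_2,C_2)|$, then decode); the present paper only cites this result from \cite{masul-drp-perm}, where the same two-step argument is used. You correctly isolate the two delicate points — that $M$ is finite and positive, and that nonemptiness of $\hom_{\CC_1}(A_1,B_1)$ is what makes the pattern $\phi$ well defined — so there is nothing to add.
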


The following dual Ramsey theorem for finite ordered oriented graphs is the main result of the paper.

\begin{THM}
  The category $\OOGRAsrq$ has the dual Ramsey property.
\end{THM}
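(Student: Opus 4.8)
\section*{Proof proposal}

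The plan, following the strategy of~\cite{masul-drp-perm}, is to present $\OOGRAsrq$ as a (non-full) subcategory of a product of copies of $\EDIGsrq$, carry the dual Ramsey property of $\EDIGsrq$ across the product by Theorem~\ref{sokic-prod}, and then descend to the subcategory by Theorem~\ref{nrt.thm.1}. First I would define $\Phi:\OOGRAsrq\to\EDIGsrq\times\EDIGsrq$ on objects by $\Phi(V,\rho,\Boxed{<})=\big((V,\rho_<,\Boxed{<}),(V,(\rho_>)^{-1},\Boxed{<})\big)$ and on morphisms by $\Phi(f)=(f,f)$. Definition~\ref{drpog.def.srqm} says exactly that a homomorphism $f$ is a strong rigid quotient map of ordered oriented graphs if and only if it is a strong rigid quotient map in each of the two digraph-with-linear-extension coordinates, so $\Phi$ is a well-defined functor; it is faithful, and injective on objects because $\rho=\rho_<\cup\rho_>$ recovers $\rho$ from the pair. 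Hence $\Phi$ identifies $\OOGRAsrq$ with a subcategory $\DD'$ of $\EDIGsrq\times\EDIGsrq$, and $\OOGRAsrq^\op$ with the subcategory $\DD:=(\DD')^\op$ of $\CC:=(\EDIGsrq\times\EDIGsrq)^\op=\EDIGsrq^\op\times\EDIGsrq^\op$. Next I would check the hypotheses of Theorem~\ref{nrt.thm.1} for $(\CC,\DD)$: the category $\EDIGsrq^\op$ has the Ramsey property (Example~\ref{drpog.ex.EDIGsrq-RP}) and finite hom-sets, so $\CC$ has the Ramsey property by Theorem~\ref{sokic-prod}; every strong rigid quotient map is surjective (being a quotient map), hence epi in $\EDIGsrq$, so every morphism of $\CC$ is monic; and all hom-sets are finite. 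By Theorem~\ref{nrt.thm.1} it therefore suffices to prove that $\DD$ is closed for binary diagrams in $\CC$, and then $\DD\cong\OOGRAsrq^\op$ has the Ramsey property, i.e.\ $\OOGRAsrq$ has the dual Ramsey property.

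To establish this closedness I would unwind the definitions. A binary diagram $F:\Delta\to\DD$ is a pair of ordered oriented graphs $\calA,\calB$ together with strong rigid quotient maps $\bar f_1,\dots,\bar f_m:\calB\to\calA$ wired up in the pattern of $\Delta$ (each bottom vertex of $\Delta$ contributing two of the maps). Since $\CC$ is a product, a commuting cocone over $F$ in $\CC$ is precisely a commuting cocone $\calK_1$ over the ``$\rho_<$-part'' diagram $\pi_1F$ in $\EDIGsrq^\op$ together with a commuting cocone $\calK_2$ over the ``$(\rho_>)^{-1}$-part'' diagram $\pi_2F$ in $\EDIGsrq^\op$; crucially, because we work over $\EDIGsrq$ rather than over $\CHrs$, $\calK_1$ and $\calK_2$ are genuine digraphs with linear extensions, equipped with strong rigid quotient maps $h^i_t,h^i_b$ to the respective coordinates of $\calB$ and $\calA$ satisfying $\bar f_j\circ h^i_t=h^i_b$ along each arrow. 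What I must produce is a single ordered oriented graph $\calC$ with strong rigid quotient maps $\tilde h_t:\calC\to\calB$, $\tilde h_b:\calC\to\calA$ obeying the same commutations --- equivalently, a single set $C$ with one linear order, carrying two reflexive relations in the roles of $\rho_{C,<}$ and $(\rho_{C,>})^{-1}$, and common maps that are strong rigid quotient maps for \emph{both} coordinates simultaneously.

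The construction I would pursue has three steps. First, normalize: enlarge $\calK_2$ by appending vertices, mapped by $h^2_\bullet$ to the $\calB$-patterns already realized in $\calK_1$ and inserted last in the linear order (so that rigidity of all the $h^2_\bullet$ is not disturbed, since the new loops never become the $\lsal$-minimum of any preimage), until every tuple $(h^1_t(k_1))_t$ is realized in $\calK_2$. Second, let $C$ be the fibred product of the vertex sets of $\calK_1$ and $\calK_2$ over $B$ along the maps $h^i_t$, with $\tilde h_t$, $\tilde h_b$ induced; the normalization makes the first projection onto, so all the $\tilde h$'s are onto, and the commutations hold by construction. Third, equip $C$ with a linear order and with a reflexive oriented-graph relation $\rho_C$ so that the projection $C\to\calK_1$ is a strong rigid quotient map for the $\rho_<$-coordinate and the projection $C\to\calK_2$ is one for the $(\rho_>)^{-1}$-coordinate; then $\tilde h_t$ and $\tilde h_b$ are composites of strong rigid quotient maps (Lemma~\ref{drpog.lem.EDIGsrq}) and the resulting $(\calC;\tilde h_t,\tilde h_b)$ is a commuting cocone over $F$ in $\DD$.

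The real difficulty --- and the technical heart of the paper --- is this third step: \emph{one} linear order on the fibred product $C$ must render \emph{both} projections rigid surjections (a lexicographic order biased towards one coordinate generally spoils rigidity of the other projection), and moreover $\rho_{C,<}$ and $(\rho_{C,>})^{-1}$ must be disjoint off the diagonal for $\calC$ to be an oriented graph at all. Overcoming this forces one to choose the normalized cocones $\calK_1,\calK_2$ in a coordinated way that exploits the bipartiteness, acyclicity and out-degree~$2$ of binary diagrams, together with a careful analysis of how the orders $\lsal$ on $\rho_<$ and on $(\rho_>)^{-1}$ transform under the maps $\widehat{\bar f_j}$. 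Everything else is the bookkeeping above plus the two cited abstract theorems.
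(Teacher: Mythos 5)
Your reduction framework coincides with the paper's: embed $\OOGRAsrq^\op$ as the subcategory $\DD^\op$ of $\CC^\op=\EDIGsrq^\op\times\EDIGsrq^\op$ whose objects are the pairs $\big((V,\rho_<,\Boxed<),(V,(\rho_>)^{-1},\Boxed<)\big)$ and whose morphisms are the pairs $(f,f)$; observe that $\CC^\op$ has the Ramsey property by Example~\ref{drpog.ex.EDIGsrq-RP} and Theorem~\ref{sokic-prod}, that its morphisms are monic and its hom-sets finite; and invoke Theorem~\ref{nrt.thm.1}. All of that is correct and is exactly how the paper proceeds.

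The gap is the step you yourself flag as ``the real difficulty'': closedness of $\DD^\op$ for binary diagrams is where the entire content of the theorem lives, and you neither prove it nor sketch a route that would work. Your plan --- form the fibred product of the two coordinate cocone objects over $B$ and then find a single linear order and a single oriented-graph relation making both projections strong rigid quotient maps --- runs into precisely the obstructions you list (one order spoils rigidity of the other projection; a pair of the fibred product can project to a non-loop edge in both coordinates, threatening orientedness), and you offer no resolution. The paper's construction is different and sidesteps all of this: writing the given commuting cocone in $\CC^\op$ as $\big((V,\rho,\Boxed\ltimes),(W,\sigma,\Boxed\ltt)\big)$ with morphisms $e_i=(f_i,g_i)$ and assuming $V\cap W=\0$, it takes the \emph{disjoint union} $D=V\union W$ with the \emph{concatenated} order $\Boxed\sqsubset=\Boxed\ltimes\oplus\Boxed\ltt$ and the relation $\delta=\rho\union\sigma^{-1}$, so that $\delta_\sqsubset=\rho\union\Delta_D$ and $(\delta_\sqsupset)^{-1}=\sigma\union\Delta_D$, and defines the cocone maps by gluing, $\phi_i=f_i\union g_i$. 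Because the two coordinates of the new object live on disjoint halves of $D$ (apart from loops), no amalgamation over $B$ and no compromise between the two orders is needed; the only nontrivial verification is a four-case comparison of $\min\widehat{\phi_i}^{-1}$ on $\lsal$-comparable edges, exploiting that loops precede non-loops in $\lsal$ and that $V$ precedes $W$ under $\sqsubset$. The resulting object is wasteful (it is nothing like a pullback), but Theorem~\ref{nrt.thm.1} only requires \emph{some} commuting cocone in $\DD$, which is why this cheap construction suffices. Without this (or an equivalent) construction, your argument does not go through.
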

\begin{proof}
  The category $\CC^\op = \EDIGsrq^\op \times \EDIGsrq^\op$ has the Ramsey property
  (Example~\ref{drpog.ex.EDIGsrq-RP} and Theorem~\ref{sokic-prod}).
  Let $\DD$ be the following subcategory of $\CC$. For each $(V, \rho, \Boxed<) \in \Ob(\OOGRAsrq)$
  the pair $\big((V, \rho_< , \Boxed<), (V, (\rho_>)^{-1}, \Boxed<)\big)$ is an object in $\DD$
  and these are the only objects in~$\DD$.
  Morphisms of $\DD$ are pairs of morphisms from $\EDIGsrq$ of the form
  $$
    (f, f) : \big((V, \rho_<,   \Boxed<), (V, (\rho_>)^{-1},   \Boxed<)\big) \to
             \big((W, \sigma_\sqsubset, \Boxed\sqsubset), (W, (\sigma_\sqsupset)^{-1}, \Boxed\sqsubset)\big).
  $$
  Clearly, a pair $\big((V, \rho, \Boxed<), (W, \sigma, \Boxed\sqsubset)\big) \in \Ob(\CC)$
  belongs to $\Ob(\DD)$ if and only if $V = W$, $\Boxed< = \Boxed\sqsubset$,
  $\rho \cap \sigma = \Delta_V$ and $\rho \cap \sigma^{-1} = \Delta_V$.

  Following Theorem~\ref{nrt.thm.1} it suffices to show that $\DD^\op$ is a subcategory of $\CC^\op$ closed for binary diagrams.
  Take any $\calA = ((A, \alpha_{<} , \Boxed<), (A, (\alpha_{>})^{-1}, \Boxed<))$ and
  $\calB = ((B, \beta_{\prec} , \Boxed\prec), (B, (\beta_{\succ})^{-1}, \Boxed\prec)) \in \Ob(\DD)$
  and let $F : \Delta \to \DD^\op$ be a binary diagram which takes the top row of $\Delta$ to $\calB$, takes the bottom row
  of $\Delta$ to $\calA$ and which has a commuting cocone in $\CC^\op$. Let
  $((V, \rho, \Boxed\ltimes), (W, \sigma, \Boxed\vartriangleleft))$ together with the morphisms
  $e_i = (f_i, g_i)$, $1 \le i \le k$, be a commuting cocone in $\CC^\op$ over~$F$:
  $$
  \xymatrix{
    & & ((V, \rho, \Boxed\ltimes), (W, \sigma, \Boxed\vartriangleleft)) \ar[dll]_{e_1} \ar[dl]^{e_i} \ar[dr]_(0.6){e_j} \ar[drr]^{e_k}
  \\
    \calB \ar[d]  & \calB \ar[dl] \ar[d]^(0.65){(u,u)}  & \ldots \ar[dr] & \calB \ar[dll]^(0.5){(v,v)} & \calB \ar[dl]
  \\
    \calA   & \calA  & \ldots & \calA   & \DD
    \save "2,1"."3,5"*[F]\frm{} \restore
  }
  $$
  (Note that the arrows in the diagram are reversed because the diagram depicts a situation in $\CC$.)
  Without loss of generality we may assume that $V \cap W = \0$.
  Recall that $\widehat{f_i} : (\rho, \Boxed{\ltimes_{sal}}) \to (\beta_{\prec}, \Boxed{\prec_{sal}})$ and
  $\widehat{g_i} : (\sigma, \Boxed{\vartriangleleft_{sal}}) \to ((\beta_{\succ})^{-1}, \Boxed{\prec_{sal}})$
  are rigid surjections.

  Now, let $D = V \union W$, $\delta = \rho \union \sigma^{-1}$ and $\Boxed\sqsubset = \Boxed\ltimes \oplus \Boxed\vartriangleleft$.
  For each $i \in \{1, \ldots, k\}$ define $\phi_i : D \to B$ as follows:
  $$
    \phi_i(x) = \begin{cases}
      f_i(x), & x \in V,\\
      g_i(x), & x \in W.
    \end{cases}
  $$
  The next step would be to prove that $\calD = ((D, \delta_{\sqsubset}, \Boxed\sqsubset),(D, (\delta_{\sqsupset})^{-1}, \Boxed\sqsubset))$
  belongs to $\Ob(\DD)$ and that $(\phi_i, \phi_i) \in \hom_\DD(\calD, \calB)$, for all~$i$.
  The former is easily verifiable bearing in mind that
  $\delta_{\sqsubset} = \rho \union \Delta_D$ and $(\delta_{\sqsupset})^{-1} = \sigma \union \Delta_D$.
  To prove the latter note that $\phi_i : (D, \delta) \to (B, \beta)$, $\phi_i : (D, \delta_\sqsubset) \to (B, \beta_\prec)$
  and $\phi_i : (D, (\delta_\sqsupset)^{-1}) \to (B, (\beta_\succ)^{-1})$ are homomorphisms, so
  we need to show that both $\widehat{\phi_i} : (\delta_{\sqsubset}, \Boxed{\sqsubset_{sal}}) \to
  (\beta_{\prec}, \Boxed{\prec_{sal}})$ and
  $\widehat{\phi_i} : ((\delta_{\sqsupset})^{-1}, \Boxed{\sqsubset_{sal}}) \to ((\beta_{\succ})^{-1}, \Boxed{\prec_{sal}})$
  are rigid surjections. As the proof of rigidity in the second case does not differ
  substantially from the one in the first case, aside from some minor technical details, we shall
  present only the first one.

  Basically, what we need to show is that whenever $a \mathrel{\prec_{sal}} b$ for some $a = (a_1, a_2), b = (b_1, b_2) \in \beta_{\prec}$, it
  immediately follows that $x \mathrel{\sqsubset_{sal}} y$, where $x = \min \widehat{\phi_i}^{-1}(a)$ and
  $y = \min \widehat{\phi_i}^{-1}(b)$. Clearly, $x = (x_1,x_2)$ and $y = (y_1, y_2)$
  belong to $\delta_{\sqsubset} = \rho \union \Delta_D = \rho \union \Delta_W$.

  There are four cases to consider, but
  before we begin, let us first notice a trivial, yet useful, fact that since $\Boxed\ltimes \subseteq \Boxed\sqsubset$
  and $\Boxed\vartriangleleft \subseteq \Boxed\sqsubset$ we have that $\Boxed{\ltimes_{sal}} \subseteq \Boxed{\sqsubset_{sal}}$
  and $\Boxed{\vartriangleleft_{sal}} \subseteq \Boxed{\sqsubset_{sal}}$.

  Case 1: $x, y \in \rho \subseteq V^2$. Then $a = \widehat{\phi_i}(x) = \widehat{f_i}(x)$ and
  $b = \widehat{\phi_i}(y) = \widehat{f_i}(y)$, bearing in mind (at all times)
  the very definition of $\phi_i$. Consequently, $a \mathrel{\prec_{sal}} b$ implies
  $x = \min \widehat{\phi_i}^{-1}(a) = \min \widehat{f_i}^{-1}(a) \mathrel{\ltimes_{sal}}
  \min \widehat{f_i}^{-1}(b) = \min \widehat{\phi_i}^{-1}(b) = y$, since $\widehat{f_i}$ is
  a rigid surjection. Finally, $x \mathrel{\sqsubset_{sal}} y$ because $\Boxed{\ltimes_{sal}} \subseteq \Boxed{\sqsubset_{sal}}$.

  Case 2: $x, y \in \Delta_W \subseteq W^2$. Similarly as in Case~1 we have that
  $a \mathrel{\prec_{sal}} b$ implies $x = \min \widehat{\phi_i}^{-1}(a) = \min \widehat{g_i}^{-1}(a) \mathrel{\vartriangleleft_{sal}}
  \min \widehat{g_i}^{-1}(b) = \min \widehat{\phi_i}^{-1}(b) = y$, only this time it is the fact that
  $\widehat{g_i}$ is a rigid surjection which yields $x \mathrel{\sqsubset_{sal}} y$.

  Case 3: $x \in \rho \subseteq V^2$ and $y \in \Delta_W \subseteq W^2$.
  Note, first, that $b_1 = b_2$ since $b = \widehat{\phi_i}(y) = (\phi_i(y_1), \phi_i(y_2))$, and $y_1 = y_2$ due to
  $y \in \Delta_W$. The assumption $a \mathrel{\prec_{sal}} b$ now implies that $a_1 = a_2$ as well, whence
  $x \in \Delta_V$. The fact that $\Boxed\sqsubset = \Boxed\ltimes \oplus \Boxed\vartriangleleft$
  immediately leads to~$x \mathrel{\sqsubset_{sal}} y$.

  Case 4: $x \in \Delta_W \subseteq W^2$ and $y \in \rho \subseteq V^2$. Similarly as in Case~3 we have that
  $a_1 = a_2$. If $b_1 \ne b_2$ then as a consequence we have that $y_1 \ne y_2$, so
  $x \mathrel{\sqsubset_{sal}} y$ by definition of $\sqsubset_{sal}$.

  Assume, therefore, that $b_1 = b_2$. Let us show that then $y_1 = y_2$. Assume this is not the case. Then
  $\widehat{\phi_i}(y_1, y_1) = (b_1, b_1) = (b_1, b_2) = \widehat{\phi_i}(y_1, y_2)$. Therefore,
  $(y_1, y_1) \in \widehat{\phi_i}^{-1}(b)$. On the other hand, we know that
  $y = \min \widehat{\phi_i}^{-1}(b)$, whence $(y_1, y_2) = y \mathrel{\sqsubseteq_{sal}} (y_1, y_1)$, which
  is impossible by definition of $\sqsubseteq_{sal}$.

  So, we have shown that $y_1 = y_2$, whence $y \in \Delta_V$ and thus $y \mathrel{\sqsubset_{sal}} x$.
  Since $y = \min \widehat{\phi_i}^{-1}(b) \in \Delta_V \subseteq V^2 $ it follows that
  $\min \widehat{\phi_i}^{-1}(b) = \min \widehat{f_i}^{-1}(b)$. Knowing that $\widehat{f_i}$ is a rigid
  surjection (or in other words that it maps an initial segment of a chain onto an initial segment of the other chain)
  we may now claim the existence of $z \in \Delta_V$ such that $z \mathrel{\sqsubset_{sal}} y$
  for which $\widehat{f_i}(z) = a$. Clearly $\widehat{\phi_i}(z) = a$, but bearing in mind that
  $z \mathrel{\sqsubset_{sal}} y \mathrel{\sqsubset_{sal}} x$
  we come to a contradiction with the fact that $x = \min \widehat{\phi_i}^{-1}(a)$.

  Finally, what remains to be checked is that $ (u, u) \cdot (\phi_i, \phi_i) = (v, v) \cdot (\phi_j, \phi_j) $ whenever
  $(u, u) \cdot  e_i  = (v, v) \cdot e_j  $.
  $$
  \xymatrix{
    & & \calD \ar@/_3mm/[dll]_{(\phi_1, \phi_1)}  \ar[dl]|(0.4){(\phi_i, \phi_i)} \ar[dr]|(0.4){(\phi_j, \phi_j)} \ar@/^3mm/[drr]^{(\phi_k, \phi_k)}& & \DD
  \\
    \calB \ar[d] & \calB \ar[dl] \ar[d]^(0.35){(u,u)} & \ldots \ar[dr] & \calB \ar[dll]^(0.525){(v,v)} & \calB \ar[dl]
  \\
    \calA   & \calA   & \ldots & \calA
  }
  $$
  Assume that  $(u, u) \cdot  e_i  = (v, v) \cdot e_j  $. Then
  $u \cdot f_i = v \cdot f_j$ and $u \cdot g_i = v \cdot g_j$.  Now, take any $x \in D$.
  If $x \in V$ then $u \cdot \phi_i(x) = u (\phi_i(x)) = u (f_i(x)) = (u \cdot f_i) (x) =  (v \cdot f_j) (x) =
                     v (f_j(x)) = v (\phi_j(x)) =  v \cdot \phi_j(x)$.
  If, on the other hand, $x \in W$ then $u \cdot \phi_i(x) = u (\phi_i(x)) = u (g_i(x)) = (u \cdot g_i) (x) =  (v \cdot g_j) (x) =
                     v (g_j(x)) = v (\phi_j(x)) =  v \cdot \phi_j(x)$.
  This concludes the proof.
\end{proof}

\section{Acknowledgements}

The authors gratefully acknowledges the support of the Grant No.\ 174019 of the Ministry of Education,
Science and Technological Development of the Republic of Serbia.


\begin{thebibliography}{99}
\bibitem{AH}
  F.\ G.\ Abramson, L.\ A.\ Harrington.
  Models without indiscernibles.
  J.~Symbolic Logic 43 (1978), 572--600.

\bibitem{MANY-MANY}
  A.\ Aranda, D.\ Bradley-Williams, J.\ Hubi\v cka, M.\ Karamanlis, M.\ Kompatscher, M.\ Kone\v cn\'y, M.\ Pawliuk.
  Ramsey expansions of metrically homogeneous graphs.
  arXiv:1707.02612

\bibitem{Carlson-Simpson}
  T.\ J.\ Carlson, S.\ G.\ Simpson.
  A dual form of Ramsey's theorem.
  Adv.\ Math.\ 53 (1984), 265--290.

\bibitem{frankl-graham-rodl}
  P. Frankl, R. L. Graham, V. R\"odl.
  Induced restricted Ramsey theorems for spaces.
  Journal of Combinatorial Theory Ser. A, 44 (1987), 120--128.

\bibitem{GR}
  R.\ L.\ Graham, B.\ L.\ Rothschild.
  Ramsey's theorem for n-parameter sets.
  Tran.\ Amer.\ Math.\ Soc.\ 159 (1971), 257--292.

\bibitem{leeb-cat}
    K.\ Leeb.
    The categories of combinatorics.
    Combinatorial structures and their applications. Gordon and Breach, New York (1970).

\bibitem{masul-drp-perm}
  D.\ Ma\v sulovi\'c.
  A Dual Ramsey Theorem for Permutations.
  The Electronic Journal of Combinatorics 24(3) (2017), \#P3.39

\bibitem{masul-DRTRS}
  D.\ Ma\v sulovi\'c.
  Dual Ramsey theorems for relational structures.
  Preprint. arXiv:1707.09544

\bibitem{masulovic-ramsey}
  D.\ Ma\v sulovi\'c, L.\ Scow.
  Categorical equivalence and the Ramsey property for finite powers of a primal algebra.
  Algebra Universalis 78 (2017), 159--179

\bibitem{N1995}
  J.\ Ne\v set\v ril.
  Ramsey theory. In: R.\ L.\ Graham, M.\ Gr\"otschel and L.\ Lov\'asz, eds, Handbook of Combinatorics, Vol.~2,
  1331--1403, MIT Press, Cambridge, MA, USA, 1995.

\bibitem{Nesetril-Rodl}
  J.\ Ne\v set\v ril, V.\ R\"odl.
  Partitions of finite relational and set systems.
  J.\ Combin.\ Theory Ser.\ A 22 (1977), 289--312.

\bibitem{Nesetril-Rodl-DRT}
  J.\ Ne\v set\v ril, V.\ R\"odl.
  Dual Ramsey type theorems.
  In: Z.\ Frol\'ik (ed), Proc.\ Eighth Winter School on Abstract Analysis, Prague, 1980, 121--123.

\bibitem{Nesetril-Rodl-1983}
  J.\ Ne\v set\v ril, V.\ R\"odl.
  Ramsey classes of set systems.
  J.~Combinat.\ Theory A 34 (1983), 183--201

\bibitem{Nesetril-Rodl-1989}
  J.\ Ne\v set\v ril, V.\ R\"odl.
  The partite construction and Ramsey set systems.
  Discr.\ Math.\ 75 (1989), 327--334

\bibitem{Promel-1985}
  H.\ J.\ Pr\"omel.
  Induced partition properties of combinatorial cubes.
  J.~Combinat.\ Theory A 39 (1985), 177--208

\bibitem{promel-voigt-surj-sets}
  H.\ J.\ Pr\"omel, B.\ Voigt.
  Hereditary atributes of surjections and parameter sets.
  Europ.~J.~Combinatorics 7 (1986), 161–170.

\bibitem{Ramsey}
  F.\ P.\ Ramsey.
  On a problem of formal logic.
  Proc.\ London Math.\ Soc.\ 30 (1930), 264--286.

\bibitem{sokic2}
  M.\ Soki\'c.
  Ramsey Properties of Finite Posets II.
  Order, 29 (2012) 31--47.
  
\bibitem{Solecki-2010}
  S. Solecki.
  A Ramsey theorem for structures with both relations and functions.
  Journal of Combinatorial Theory, Ser. A, 117 (2010), 704--714.

\bibitem{solecki-dual-ramsey-trees}
  S.\ Solecki.
  Dual Ramsey theorem for trees.
  Preprint, arXiv:1502.04442
\end{thebibliography}
\end{document}